\numberwithin{equation}{section}
\newcommand{\qed}{\hfill \ensuremath{\Box}}
\def\XXint#1#2#3{{\setbox0=\hbox{$#1{#2#3}{\int}$}
\vcenter{\hbox{$#2#3$}}\kern-.5\wd0}}
\newcommand{\dbar}{\overline{\partial}}
\newcommand{\ddbar}{\frac{\sqrt{-1}}{2\pi} \partial\dbar}
\begin{document}
\newcounter{remark}
\newcounter{theor}
\setcounter{remark}{0} \setcounter{theor}{1}
\newtheorem{claim}{Claim}
\newtheorem{theorem}{Theorem}[section]
\newtheorem{proposition}{Proposition}[section]
\newtheorem{lemma}{Lemma}[section]
\newtheorem{definition}{Definition}[section]
\newtheorem{conjecture}{Conjecture}[section]
\newtheorem{corollary}{Corollary}[section]
\newenvironment{proof}[1][Proof]{\begin{trivlist}
\item[\hskip \labelsep {\bfseries #1}]}{\end{trivlist}}
\newenvironment{remark}[1][Remark]{\addtocounter{remark}{1} \begin{trivlist}
\item[\hskip \labelsep {\bfseries #1
\thesection.\theremark}]}{\end{trivlist}}
\newenvironment{example}[1][Example]{\addtocounter{remark}{1} \begin{trivlist}
\item[\hskip \labelsep {\bfseries #1
\thesection.\theremark}]}{\end{trivlist}}
~

\begin{center}
{\large \bf
Compactness of K\"ahler-Ricci solitons on Fano manifolds
\footnote{Work supported in part by
National Science Foundation grants DMS-1711439, DMS-12-66033 and DMS-1710500. }}
\bigskip\bigskip

{Bin Guo$^*$, Duong H. Phong$^{**}$,  Jian Song$^\dagger$ and Jacob Sturm$^{\ddagger}$} \\

\bigskip

\end{center}

\begin{abstract}

{\footnotesize In this short paper, we improve the result of Phong-Song-Sturm  on degeneration of Fano K\"ahler-Ricci solitons by removing the assumption on the uniform bound of the Futaki invariant. Let  $\mathcal{KR}(n)$ be the space of  K\"ahler-Ricci solitons on $n$-dimensional  Fano manifolds. We show that  after passing to a subsequence, any sequence in $\mathcal{KR}(n)$ converge in the Gromov-Hausdorff topology to a  K\"ahler-Ricci soliton on an $n$-dimensional $\mathbb{Q}$-Fano variety with log terminal singularities. }

\end{abstract}

\section{Introduction}  

The Ricci solitons on compact and complete Riemannian manifolds naturally arise as models of singularities for the Ricci flow \cite{H}. The existence and uniqueness of Ricci solitons has been extensively studied. A gradient  Ricci soliton is a Riemannian metric satisfying the following soliton equation
\begin{equation}\label{soleq1}
Ric(g) = \lambda g + \nabla^2 u
\end{equation}
for some smooth function $f$ with $\lambda = -1, 0, 1$.  Such a soliton is called a gradient shrinking Ricci soliton if  $\lambda>0$. If we let the vector field $\mathcal{V}$ be defined by $\mathcal{V}= \nabla u$, the soliton equation becomes
\begin{equation}\label{soleq2}
 Ric(g) = \lambda g + L_{\mathcal{V}} g, 
\end{equation}
where $L_{\mathcal{V}}$ is the Lie derivative along $\mathcal{V}$.

A K\"ahler metric $g$ on a K\"ahler manifold $X$ is called a K\"ahler-Ricci soliton if it satisfies the soliton equation (\ref{soleq1}) or equation (\ref{soleq2})  for  $\mathcal{V}=\nabla u$. Any shrinking K\"ahler-Ricci soliton on a compact K\"ahler manifold $X$ must be a gradient Ricci soliton and such a K\"ahler manifold must be a Fano manifold, i.e. $c_1(X)>0$. The vector field $\mathcal V$ must be holomorphic and it can be expressed in terms of the Ricci potential $u$, with
\begin{equation}
\label{soliton}
R_{i\bar j}= g_{i\bar j} -  u_{i\bar j}, ~~u_{ij}=  u_{\bar i \bar j}= 0, ~\mathcal{V}^i = -g^{i\bar j}u_{\bar j}. 
\end{equation}

The well-known Futaki invariant associated to the K\"ahler-Ricci soliton $(X, g, \mathcal{V})$ on a Fano manifold $X$ is given by
$$
\mathcal{F}_X(\mathcal{V}) = \int_X |\nabla u|^2 dV_g = \int_X |\mathcal{V}|^2 dV_g \geq 0. 
$$

Let $\mathcal{KR}(n, F)$ be the set of compact K\"ahler-Ricci solitons $(X, g)$ of complex dimension $n$ with $$Ric(g) = g + L_{\mathcal{V}} g,~  \mathcal{F}_X (\mathcal{V}) \leq F.$$
It is proved by Tian-Zhang \cite{TZ} that $\mathcal{KR}(n, F)$ is compact in the Gromov-Hausdorff topology with an additional uniform upper volume bound. In \cite{PSS}, Phong-Song-Sturm established a partial $C^0$-estimate on $\mathcal{KR}(n, F)$, generalizing the celebrated result of Donaldson-Sun \cite{DS} for the space of uniformly non-collapsed K\"ahler manifolds with uniform Ricci curvature bounds. An immediate consequence of the partial $C^0$-estimate in \cite{PSS} is that the limiting metric space must be a $\mathbb{Q}$-Fano variety equipped with a K\"ahler-Ricci soliton metric. 

The purpose of this paper is to remove the assumption in \cite{PSS} on the bound of the Futaki invariant. \begin{definition}
Let $\mathcal{KR}(n)$ be the set of   compact K\"ahler-Ricci solitons $(X, g, \mathcal{V})$ of complex dimension $n$ with $$Ric(g) = g + L_{\mathcal{V}} g.$$
\end{definition}

The following is the main result of the paper.

\begin{theorem} \label{main1} Let $\{ (X_i, g_i, \mathcal{V}_i) \}_{i=1}^\infty $ be a sequence in  $\mathcal{KR}(n)$ with $n\geq 2$. Then after possibly passing to subsequence, $(X_i, g_i)$ converges in the Gromov-Hausdorff topology to a compact metric length space $(X_\infty, d_\infty)$ satisfying the following.

\begin{enumerate}

\item The singular set $\Sigma_\infty$ of the metric space $(X_\infty, d_\infty)$ is a closed set of Hausdorff dimension no greater than $2n-4$. 

\item $(X_i, g_i, \mathcal{V}_i)$ converges smoothly to a K\"ahler-Ricci soliton  $(X_\infty\setminus \Sigma_\infty, g_\infty, \mathcal{V}_\infty)$ satisfying 
\begin{equation}
Ric(g_\infty) = g_\infty + L_{\mathcal{V}_\infty} g_\infty,
\end{equation}
where $\mathcal{V}_\infty$ is a holomorphic vector field on $X_\infty\setminus \Sigma_\infty$.  

\item $(X_\infty, d_\infty)$ coincides with the metric completion of $(X_\infty \setminus \Sigma_\infty, g_\infty)$ and it is a projective $\mathbb{Q}$-Fano variety with log terminal singularities. The soliton K\"ahler metric $g_\infty$ extends to a K\"ahler current on $X_\infty$ with bounded local potential and $\mathcal{V}_\infty$ extends to a global holomorphic vector field on $X_\infty$.


\end{enumerate}

\end{theorem}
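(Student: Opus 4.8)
The plan is to reduce Theorem~\ref{main1} to the main result of \cite{PSS} together with the compactness theorem of \cite{TZ}. It suffices to produce constants $V_n$ and $F_n$, depending only on $n$, such that every soliton $(X,g,\mathcal V)\in\mathcal{KR}(n)$ satisfies $\mathrm{Vol}(X,g)\le V_n$ and $\mathcal F_X(\mathcal V)\le F_n$: once this is known, the given sequence lies in $\mathcal{KR}(n,F_n)$ with a uniform upper volume bound, so \cite{TZ} furnishes a Gromov--Hausdorff convergent subsequence with compact length-space limit $(X_\infty,d_\infty)$, and the partial $C^0$ estimate of \cite{PSS}, combined with the Cheeger--Colding--Donaldson--Sun structure theory in the Bakry--\'Emery form developed in \cite{PSS, DS}, yields items (1)--(3). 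I note for orientation that the Futaki bound enters the proof of the partial $C^0$ estimate only through a fixed list of a priori estimates --- a non-collapsing (Sobolev) constant together with $C^0$, gradient and Laplacian bounds on the normalized Ricci potential $u$ --- the Bakry--\'Emery Ricci tensor being already fixed to be $g$ by \eqref{soliton}; so the whole point is to make each of these constants depend on $n$ alone.

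The volume bound is immediate: since $\omega_g\in 2\pi c_1(X)$, one has $\mathrm{Vol}(X,g)=\frac{(2\pi)^n}{n!}(-K_X)^n$, and for a smooth Fano $n$-fold the anticanonical degree satisfies $(-K_X)^n\le (n+1)^n$ (by Mori theory, with equality only for $\mathbb P^n$); hence $\mathrm{Vol}(X,g)\le V_n:=\frac{(2\pi(n+1))^n}{n!}$.

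The substantive step, and the one I expect to be \textbf{the main obstacle}, is the uniform bound on the Ricci potential and, through it, on $\mathcal F_X(\mathcal V)=\int_X|\nabla u|^2\,dV_g$. Normalize $u$ by $\int_X e^{-u}\,\omega_g^n=n!\,\mathrm{Vol}(X,g)$. Tracing \eqref{soliton} gives $R=n-\Delta u$, a Hamilton-type soliton identity expresses $|\nabla u|^2$ in terms of $R$ and $u$ up to a constant, and $R>0$; thus once $\|u\|_{C^0}$ is bounded by a dimensional constant, so are $\|\nabla u\|_{C^0}$, $\|\Delta u\|_{C^0}$, and $\mathcal F_X(\mathcal V)$. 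To obtain $\|u\|_{C^0}\le C(n)$ I would use the complex Monge--Amp\`ere equation $(\omega_g+\ddbar\psi)^n=e^{F-\psi+\mathcal V(\psi)}\omega_g^n$ satisfied by the soliton potential (equivalently, reducing this equation via the invariance of the soliton under the torus generated by $\mathrm{Im}\,\mathcal V$ to a real Monge--Amp\`ere type equation), where the first-order soliton term is absorbed as in the literature on K\"ahler--Ricci solitons and the right-hand side is controlled in $L^p$ purely by the volume bound; a Ko\l odziej-type pluripotential estimate then gives the oscillation bound, and the Bochner formula upgrades it to the gradient and Laplacian bounds. An equivalent route is through Perelman's $\mu$-entropy, which is bounded above on Fano metrics in $2\pi c_1$ of bounded volume by a dimensional constant and, evaluated on a soliton, is computed from \eqref{soliton} to equal a constant depending only on $n$ and the volume minus a positive multiple of $\mathcal F_X(\mathcal V)$; combined with a dimensional lower bound for $\mu$ on shrinking solitons (universal $\kappa$-non-collapsing), this again bounds $\mathcal F_X(\mathcal V)$ and $\|u\|_{C^0}$. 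Either way, keeping every constant dimensional --- in particular the two-sided $C^0$ bound on $u$, equivalently $\kappa(n)$-non-collapsing --- is the crux.

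With these two bounds in hand, the sequence $\{(X_i,g_i,\mathcal V_i)\}$ lies in $\mathcal{KR}(n,F_n)$ with $\mathrm{Vol}(X_i,g_i)\le V_n$. Apply \cite{TZ} to pass to a Gromov--Hausdorff convergent subsequence $(X_i,g_i)\to(X_\infty,d_\infty)$, and then \cite{PSS}: the partial $C^0$ estimate embeds $X_i$ by a fixed power of $-K_{X_i}$ into a fixed $\mathbb P^N$ with uniformly bounded geometry, the Cheeger--Colding--Donaldson--Sun analysis gives that $\Sigma_\infty$ is closed of Hausdorff dimension $\le 2n-4$ and that the convergence is smooth on $X_\infty\setminus\Sigma_\infty$ to a K\"ahler--Ricci soliton $(g_\infty,\mathcal V_\infty)$ with $\mathrm{Ric}(g_\infty)=g_\infty+L_{\mathcal V_\infty}g_\infty$, the limit $X_\infty$ is a normal projective $\mathbb Q$-Fano variety with log terminal singularities on which $g_\infty$ extends as a K\"ahler current with bounded local potential, and $\mathcal V_i$ --- bounded in $\mathfrak{aut}(X_i)\subset\mathfrak{sl}(N+1)$ because $\mathcal F_{X_i}(\mathcal V_i)\le F_n$ --- converges to a vector field preserving $X_\infty$ that agrees with $\mathcal V_\infty$ on the regular locus and hence extends $\mathcal V_\infty$ to a global holomorphic vector field on $X_\infty$. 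This establishes items (1)--(3).
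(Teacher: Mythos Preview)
Your reduction strategy---bound the Futaki invariant (equivalently, Perelman's $\mu$-functional) uniformly in $n$, then invoke \cite{TZ} and \cite{PSS}---is exactly the framework the paper uses. The volume bound is fine (indeed $c_1(X)^n$ is bounded for smooth Fano $n$-folds by Koll\'ar--Miyaoka--Mori boundedness, though the explicit constant $(n+1)^n$ is not needed and not, as far as I know, established in this generality). The problem is that you have not actually proved the step you yourself flag as ``the crux'': the dimensional lower bound for $\mu(g)$, equivalently the dimensional $C^0$ bound on $u$.

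Neither of your two routes works as written. Route (a) is circular: in the soliton Monge--Amp\`ere equation the right-hand side contains the term $\mathcal V(\psi)=g^{i\bar j}u_{\bar j}\psi_i$, so ``the right-hand side is controlled in $L^p$ purely by the volume bound'' presupposes exactly the gradient bound on $u$ you are trying to prove; Ko\l odziej does not close without that input. Route (b) asserts ``a dimensional lower bound for $\mu$ on shrinking solitons (universal $\kappa$-non-collapsing)'' as if it were known, but no such a priori bound exists for compact shrinking K\"ahler--Ricci solitons depending only on $n$; producing one is precisely the content of the paper. (A universal $\kappa$ would in particular give a universal Sobolev constant along the Fano K\"ahler--Ricci flow, which is not known without further hypotheses.)

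The paper's actual argument supplies the missing ingredient via birational geometry. Birkar's boundedness theorem gives $\varepsilon_0(n)>0$ with $\mathrm{lct}(X,m^{-1}D)\ge\varepsilon_0$ for every $D\in|-mK_X|$; by Demailly's identification this is $\alpha(X)\ge\varepsilon_0(n)$. Tian's continuity-method Harnack estimate then produces, on \emph{every} Fano $n$-fold, a K\"ahler metric $\hat\omega\in c_1(X)$ with $\mathrm{Ric}(\hat\omega)\ge\varepsilon_1(n)\hat\omega$. Myers, Bishop--Gromov, the integrality of $c_1(X)^n$, and Croke give a uniform Sobolev constant for $\hat g$, hence $\mu(\hat g)\ge -A(n)$ by the standard log-Sobolev computation. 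Finally, running the normalized K\"ahler--Ricci flow from $\hat\omega$ and using its smooth convergence to the soliton together with Perelman's monotonicity yields $\mu(g)\ge -A(n)$ for the soliton metric. With this in hand, your last paragraph (the appeal to \cite{TZ} and \cite{PSS}) goes through verbatim, and the bounds on $\|u\|_{C^0}$, $|\nabla u|$, $R$, $\mathrm{diam}$, and $\mathcal F_X$ follow from the Perelman--Sesum--Tian argument as in \cite{PSS}. The point to take away is that the dimensional $\mu$-lower bound is not an analytic fact about solitons; it requires the algebro-geometric input of Birkar's theorem.
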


The assumption on the bound of the Futaki invariant in \cite{PSS} is used to obtain a uniform lower bound of Perelman's $\mu$-functional. We use the recent deep result of Birkar \cite{B} in birational geometry and show that there exists $\epsilon(n)>0$ such that for any $n$-dimensional Fano manifold $X$, there exists a K\"ahler metric $g$ with $Ric(g) \geq \epsilon g.$ In particular, the $\mu$-functional for $(X, g)$ is bounded below by a uniform constant that only depends on $n$. 
Then for any K\"ahler-Ricci soliton $(X, g)\in \mathcal{KR}(n)$, the $\mu$-functional for $(X, g)$ is uniformly bounded below because the soliton metric is the limit of the K\"ahler-Ricci flow.   
The proof of Theorem \ref{main1} also implies a uniform bound for the scalar curvature and the Futaki invariant for all $(X, g)\in \mathcal{KR}(n)$.

\begin{corollary} \label{main2} There exist $F=F(n)$, $D=D(n)$ and $K=K(n)>0$  such that 
for any $(X, g, u) \in \mathcal{KR}(n)$, the Futaki invariant, the diameter and scalar curvature $R$ of $(X, g)$ satisfy
$$\mathcal{F}_X \leq F,~ diam(X, g) \leq D,  ~ 0< R \leq K. $$

\end{corollary}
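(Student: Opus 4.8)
The plan is to extract all three bounds from the compactness statement of Theorem \ref{main1} combined with the uniform lower bound on Perelman's $\mu$-functional described in the paragraph preceding the corollary. First I would argue by contradiction for each quantity: suppose there is a sequence $(X_i, g_i, \mathcal{V}_i) \in \mathcal{KR}(n)$ along which $\mathcal{F}_{X_i}(\mathcal{V}_i) \to \infty$ (resp.\ $\mathrm{diam}(X_i, g_i) \to \infty$, resp.\ $\sup_{X_i} R_{g_i} \to \infty$). By Theorem \ref{main1} we may pass to a subsequence converging in Gromov-Hausdorff topology to a limit $(X_\infty, d_\infty)$ which is a $\mathbb{Q}$-Fano variety carrying a genuine K\"ahler-Ricci soliton $(X_\infty \setminus \Sigma_\infty, g_\infty, \mathcal{V}_\infty)$, with smooth convergence on the regular part. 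The diameter bound is then immediate: Gromov-Hausdorff convergence to a compact limit forces $\mathrm{diam}(X_i, g_i) \to \mathrm{diam}(X_\infty, d_\infty) < \infty$, so the diameters are uniformly bounded (if they were unbounded, the limit could not be compact), contradicting $\mathrm{diam} \to \infty$; hence $D = D(n)$ exists.

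For the scalar curvature, I would use the structure of the soliton equation. Taking the trace of \eqref{soliton} gives $R_{g} = n - \Delta_g u$ on each $(X_i, g_i)$, and a standard Bochner/maximum-principle argument for shrinking K\"ahler-Ricci solitons (as in Tian-Zhang \cite{TZ}) shows $R_g \geq 0$ and in fact $0 < R_g$ on the compact Fano manifold; the upper bound $R_g \leq K$ is what needs the uniform $\mu$-lower bound. The key input is that the soliton metrics $g_i$ arise as limits of the normalized K\"ahler-Ricci flow starting from any metric in $c_1(X_i)$, so Perelman's monotonicity yields $\mu(g_i) \geq \mu(g_i^{\mathrm{initial}}) \geq -C(n)$, where the last inequality uses Birkar's boundedness to produce, for every $n$-dimensional Fano $X_i$, a K\"ahler metric with $\mathrm{Ric} \geq \epsilon(n)\, g_i^{\mathrm{initial}}$ and hence a $\mu$-bound depending only on $n$. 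Along the flow Perelman also proved uniform scalar curvature and diameter bounds in terms of the $\mu$-lower bound and $n$ (this is the no-local-collapsing package for the normalized K\"ahler-Ricci flow on Fano manifolds), and these pass to the soliton limit of the flow; this gives $0 < R_{g_i} \leq K(n)$ uniformly.

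Finally, the Futaki bound follows from the scalar curvature bound: since $\mathcal{F}_{X_i}(\mathcal{V}_i) = \int_{X_i} |\mathcal{V}_i|^2\, dV_{g_i} = \int_{X_i} |\nabla u_i|^2\, dV_{g_i}$ and integration by parts against \eqref{soliton} gives $\int_{X_i} |\nabla u_i|^2\, dV_{g_i} = \int_{X_i} (-\Delta_g u_i)\, u_i\, dV_{g_i} = \int_{X_i}(R_{g_i} - n)(\text{something bounded})$, one controls $\mathcal{F}_{X_i}$ once $R_{g_i}$ is bounded and $u_i$ is controlled in $L^2$; the normalization of $u_i$ (fixed by $\int_{X_i} e^{-u_i}\, dV_{g_i} = \int_{X_i} dV_{g_i}$ or equivalently by Perelman's $W$-functional minimizer condition) together with the $\mu$-bound gives a uniform $C^0$ bound on $u_i$, so $\mathcal{F}_{X_i} \leq F(n)$. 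I expect the main obstacle to be making the passage from the flow to the soliton rigorous for the $\mu$-bound and the scalar curvature bound—i.e.\ verifying that $\mu(g_i) \geq -C(n)$ genuinely transfers through the long-time limit of the K\"ahler-Ricci flow and that Perelman's a priori estimates along the flow degenerate to estimates on the soliton—rather than the elementary diameter and Futaki deductions.
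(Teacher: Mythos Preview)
Your core idea is the paper's: use Birkar's boundedness to get a metric with $\mathrm{Ric}\ge\varepsilon(n)\,g$ on every Fano $n$-fold, deduce a uniform lower bound $\mu(g)\ge -A(n)$ for every soliton via monotonicity along the K\"ahler--Ricci flow, and then invoke Perelman's estimates (as in Sesum--Tian and \cite{PSS}) to get uniform bounds on $R$, $\mathrm{diam}$, $|\nabla u|$ and $u$. That is exactly how the paper proceeds, and your concern about ``passing from the flow to the soliton'' is a non-issue: on a soliton the flow is $g(t)=\varphi_t^*g$, so Perelman's a priori bounds apply directly to $g$ itself.

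Where you diverge is in packaging. For the diameter you argue by contradiction through Theorem~\ref{main1}; this is formally valid once Theorem~\ref{main1} is in hand, but in the paper's logic the diameter bound is an \emph{input} to the \cite{PSS} argument that proves Theorem~\ref{main1}, not an output, so your route is circular in spirit (and unnecessary: the Perelman package you already invoke for $R$ gives $\mathrm{diam}\le D(n)$ at the same stroke). For the Futaki invariant you go through $\int(-\Delta u)u\,dV$ and a $C^0$ bound on $u$; the paper is more direct: Perelman's estimates give $|\nabla u|^2\le C(n)$ pointwise, and then $\mathcal{F}_X=\int_X|\nabla u|^2\,dV_g\le C(n)\,\mathrm{Vol}(X,g)\le F(n)$, since the volume of an $n$-dimensional Fano is bounded. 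Both of your detours work, but the paper's path is shorter and avoids any appearance of circularity.
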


We also derive some general compactness for compact or complete gradient shrinking solitons assuming a uniform lower bound of Perelman's $\mu$-functional (see Section 3). For any closed or complete gradient shrinking soliton $(M, g, u)$, one can always normalize $u$ such that $\int_M e^{-u} dV_g =1$. We define $\mathcal{RS}(n, A)$ to be the space of  closed or complete shrinking gradient soliton $(M, g, u)$ of real dimension $n\geq 4$ satisfying
\begin{equation}
\mu(g) \geq -A.
\end{equation}
 Then for any $A\geq 0$ and any sequence $(M_j, g_j, u_j , p_j) \in \mathcal{RS}(n, A)$ with $p_j$ being the minimal point of $u_j$, after passing to a subsequence, it converges in the pointed Gromov-Hausdorff topology to a compact or complete metric space $(M_\infty, d_\infty)$ of dimension $n$ with smooth convergence to a shrinking gradient Ricci soliton outside the closed singular set of  dimension no greater than $n-4$.


\section{Proof of Theorem \ref{main1}}


Let us first recall the $\alpha$-invariant introduced by Tian on a Fano manifold \cite{T0}. 

\begin{definition}
On a Fano manifold $(X,\omega)$ with $\omega\in c_1(X)$, the $\alpha$-invariant is defined as
$$\alpha(X)= \sup\{\alpha>0~|~\exists C_\alpha<\infty \text{ such that  }\int_X e^{-\alpha(\varphi - \sup_X \varphi)} \omega^n \le C_\alpha, \, \forall \varphi\in PSH(X,\omega)\}.$$

\end{definition}
It is obvious that the  $\alpha(X)$ does not depend on the choice $\omega \in c_1(X)$.

\begin{definition}
Let $X$ be a normal projective variety and $\Delta$ an effective $\mathbb Q$-Cartier divisor, the pair $(X,\Delta)$ is said to be log canonical if the coefficients of components of $\Delta$ are no greater than $1$ and there exists a log resolution $\pi:Y\to X$ such that $\pi^{-1}(\mathrm{supp} \Delta )\cup \mathrm{exc}(\pi)$ is a divisor with normal crossings satisfying 
$$K_Y = \pi^*(K_X + \Delta) + \sum_j a_j F_j,\quad  \mathbb Q \ni a_j\ge -1,\,\forall\; j.$$

\end{definition}

\begin{definition}
Let $X$ be a projective manifold and $D$ be a $\mathbb Q$-Cartier divisor. The {\em log canonical threshold} of $D$ is defined  by
$$\mathrm{lct}(X,D) = \sup\{t\in\mathbb R ~|~ (X,t D) \text{ is log canonical}\}.$$

\end{definition}
It is proved by Demailly that the $\alpha$-invariant is related to the log canonical thresholds of anti-canonical divisors through the following formula (see Theorem A.3. in the Appendix A of \cite{CS}). 
\begin{theorem} For any Fano manifold $X$, 
$$\alpha (X) = \inf_{m\in \mathbb Z_{>0}} \inf_{D\in |-mK_X|} \mathrm{lct}(X, m^{-1} D)$$
\end{theorem}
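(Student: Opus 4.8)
\medskip
\noindent\textbf{Proof proposal.} Write $\beta:=\inf_{m\ge1}\inf_{D\in|-mK_X|}\mathrm{lct}(X,m^{-1}D)$ for the right-hand side. The plan is to prove $\alpha(X)\le\beta$ and $\alpha(X)\ge\beta$, using as the main tool the \emph{complex singularity exponent} $c_\omega(\varphi):=\sup\{c>0:e^{-c(\varphi-\sup_X\varphi)}\in L^1(X,\omega^n)\}$ of an $\omega$-psh function. Fix $\omega\in c_1(X)$ and a smooth metric $h$ on $-K_X$ of curvature $\omega$, and to $D\in|-mK_X|$ with defining section $s$ attach the potential $\psi_D:=\tfrac1m\log|s|^2_{h^m}\in PSH(X,\omega)$, which satisfies $\omega+\ddbar\psi_D=\tfrac1m[D]$; resolving the singularities of $D$ shows $e^{-c\psi_D}\in L^1(X,\omega^n)\iff c<\mathrm{lct}(X,m^{-1}D)$, so $c_\omega(\psi_D)=\mathrm{lct}(X,m^{-1}D)$. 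The inequality $\alpha(X)\le\beta$ is then immediate: for $\alpha<\alpha(X)$ the defining bound of $\alpha(X)$ applied to $\psi_D$ forces $e^{-\alpha\psi_D}\in L^1$, hence $\alpha<\mathrm{lct}(X,m^{-1}D)$; infimize over $m,D$ and let $\alpha\uparrow\alpha(X)$. The same computation also gives $\inf_{\varphi}c_\omega(\varphi)\le\inf_{m,D}c_\omega(\psi_D)=\beta$.

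\smallskip
\noindent For the reverse inequality I would proceed in two steps. \emph{Step 1: $c_\omega(\varphi)\ge\beta$ for every $\varphi\in PSH(X,\omega)$ with $\sup_X\varphi=0$}, via Demailly's regularization of quasi-psh functions. Put $\epsilon_m:=1-\tfrac1m$, let $(g_\ell)_\ell$ be an orthonormal basis of $H^0(X,-mK_X)$ for the inner product $\int_X g\,\overline{g'}\,e^{-\epsilon_m m\varphi}\,\omega^n$ — a nonzero space for $m\gg1$ by Nadel vanishing, since $-K_X$ is ample — and set $\varphi_m:=\tfrac1{\epsilon_m m}\log\sum_\ell|g_\ell|^2_{h^m}$, a $(1+\delta_m)\omega$-psh function with $\delta_m\to0$. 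By the Ohsawa--Takegoshi extension theorem $\varphi_m\ge\varphi-\tfrac Cm$, by the sub-mean-value inequality $\varphi_m\to\varphi$ in $L^1(X)$, and hence by Demailly's regularization theorem $c_\omega(\varphi)\ge\limsup_m c_\omega(\varphi_m)$. On the other hand $\sum_\ell|g_\ell|^2\ge|g_0|^2$ for any fixed basis vector $g_0$, so $\varphi_m\ge\tfrac1{\epsilon_m m}\log|g_0|^2_{h^m}$, whose complex singularity exponent equals $\epsilon_m\,m\,\mathrm{lct}(X,\mathrm{div}\,g_0)=\epsilon_m\,\mathrm{lct}(X,m^{-1}\mathrm{div}\,g_0)\ge\epsilon_m\beta$ because $\mathrm{div}\,g_0\in|-mK_X|$; thus $c_\omega(\varphi_m)\ge\epsilon_m\beta$, and $c_\omega(\varphi)\ge\limsup_m\epsilon_m\beta=\beta$.

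\smallskip
\noindent \emph{Step 2: upgrade the pointwise bound to a uniform one}, i.e. $\alpha(X)\ge\inf_\varphi c_\omega(\varphi)$. I would argue by contradiction: fix $\alpha<\beta$ and choose $\varphi_j\in PSH(X,\omega)$ with $\sup_X\varphi_j=0$ and $\int_X e^{-\alpha\varphi_j}\,\omega^n\to\infty$. The set $\{\varphi\in PSH(X,\omega):\sup_X\varphi=0\}$ is compact in $L^1(X)$, so after passing to a subsequence $\varphi_j\to\varphi_\infty$ in $L^1$ with $\varphi_\infty$ in this class (by the Hartogs lemma $\sup_X\varphi_\infty=0$ and $\varphi_\infty\not\equiv-\infty$). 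By Step 1, $c_\omega(\varphi_\infty)\ge\beta>\alpha$; choose $\alpha<\alpha'<\beta$. The Demailly--Kollár effective semicontinuity theorem for complex singularity exponents, applied on finitely many coordinate balls covering $X$, gives $\sup_j\int_X e^{-\alpha'\varphi_j}\,\omega^n<\infty$, and Hölder's inequality then gives $\sup_j\int_X e^{-\alpha\varphi_j}\,\omega^n\le\big(\int_X\omega^n\big)^{1-\alpha/\alpha'}\big(\sup_j\int_X e^{-\alpha'\varphi_j}\,\omega^n\big)^{\alpha/\alpha'}<\infty$, a contradiction. Hence $\alpha(X)\ge\beta$, and with the first paragraph $\alpha(X)=\beta$.

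\smallskip
\noindent The main obstacle lies in the two analytic inputs to the reverse direction: Demailly's regularization of $\omega$-psh functions by normalized logarithms of Bergman kernels, in the quantitative form controlling complex singularity exponents (Step 1), and the Demailly--Kollár effective semicontinuity of those exponents (Step 2); both ultimately rest on the Ohsawa--Takegoshi $L^2$-extension theorem. One must also be attentive to the normalizations of $PSH(X,\omega)$ — that an $L^1$-limit of $\sup$-normalized potentials stays $\sup$-normalized and is not identically $-\infty$ — but once these ingredients are in place, the remaining bookkeeping (tracking the $\epsilon_m$, $\delta_m$ and the various constants) is routine.
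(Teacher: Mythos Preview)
The paper does not actually prove this theorem: it is quoted as a result of Demailly (Theorem~A.3 of the appendix to \cite{CS}) and used as a black box. So there is no ``paper's own proof'' to compare against; your proposal is in effect a sketch of Demailly's original argument, and the easy inequality $\alpha(X)\le\beta$ together with your Step~2 (the upgrade from individual to uniform integrability via compactness of $\sup$-normalized $\omega$-psh functions and Demailly--Koll\'ar semicontinuity) are correctly set up.

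There is, however, a genuine gap in Step~1. The assertion $c_\omega(\varphi)\ge\limsup_m c_\omega(\varphi_m)$ does not follow from the two facts you list. From $\varphi_m\ge\varphi-C/m$ one gets $e^{-c\varphi_m}\le e^{cC/m}e^{-c\varphi}$, hence $c_\omega(\varphi_m)\ge c_\omega(\varphi)$, the \emph{opposite} inequality; and $L^1$-convergence $\varphi_m\to\varphi$ together with Demailly--Koll\'ar lower semicontinuity again yields only $\liminf_m c_\omega(\varphi_m)\ge c_\omega(\varphi)$. Neither of these, combined with your (correct) bound $c_\omega(\varphi_m)\ge\epsilon_m\beta$, produces $c_\omega(\varphi)\ge\beta$. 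Concretely, if one takes the $(g_\ell)$ to be an orthonormal basis of all of $H^0(X,-mK_X)$ as you write, then for $m$ large this space is base-point free, $\sum_\ell|g_\ell|^2>0$ everywhere, $\varphi_m$ is bounded, and $c_\omega(\varphi_m)=+\infty$; the claimed inequality is then plainly false for any singular $\varphi$. If instead you mean the subspace on which the weighted norm is finite, namely $H^0(-mK_X\otimes\mathcal I(\epsilon_m m\varphi))$, then the desired convergence $c_\omega(\varphi_m)\to c_\omega(\varphi)$ is true but is itself a substantial theorem (essentially the content of Demailly's analysis relating multiplier ideals of $\varphi$ and of its Bergman approximants), not a formal consequence of ``$\varphi_m\ge\varphi-C/m$ and $\varphi_m\to\varphi$ in $L^1$''. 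You need to invoke that deeper statement explicitly, or replace Step~1 by Demailly's actual mechanism, which produces from a $\varphi$ with $c_\omega(\varphi)$ small a divisor $D\in|-mK_X|$ with $\mathrm{lct}(X,m^{-1}D)$ nearly as small.
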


Recently Birkar (Theorem 1.4 of \cite{B}) obtains a uniform positive lower bound of the log canonical threshold and the following is an immediate corollary of Birkar's result. 
\begin{theorem}
There exists   $\varepsilon_0=\varepsilon_0(n)>0$  such that for any $n$-dimensional Fano manifold $X$ 
$$\alpha(X)\ge \varepsilon_0(n).$$
\end{theorem}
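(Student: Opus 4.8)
\emph{Proof strategy.} The plan is to deduce the bound directly from Demailly's formula above together with Birkar's uniform lower bound on log canonical thresholds. By the theorem of Demailly recalled above, for every Fano manifold $X$ one has
$$\alpha(X)=\inf_{m\in\mathbb Z_{>0}}\ \inf_{D\in|-mK_X|}\mathrm{lct}\big(X,m^{-1}D\big),$$
so it suffices to produce a constant $\varepsilon_0(n)>0$, depending only on $n$ and not on $X$, such that $\mathrm{lct}(X,m^{-1}D)\ge\varepsilon_0(n)$ for every $m\in\mathbb Z_{>0}$ and every $D\in|-mK_X|$; indeed, an infimum of numbers all bounded below by $\varepsilon_0(n)$ is itself $\ge\varepsilon_0(n)$.

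First I would observe that for $D\in|-mK_X|$ the $\mathbb Q$-divisor $\tfrac{1}{m}D$ is effective and satisfies $\tfrac{1}{m}D\sim_{\mathbb Q}-K_X$, so the task is reduced to a lower bound, uniform over all $n$-dimensional Fano manifolds, for the log canonical threshold $\mathrm{lct}(X,D')$ of an effective $\mathbb Q$-divisor $D'$ that is $\mathbb Q$-linearly equivalent to $-K_X$. This quantity is exactly what Birkar's theorem controls: by Theorem 1.4 of \cite{B} there is a positive number $t=t(n)$, depending only on the dimension, such that for every projective klt Fano variety $X$ of dimension $n$ --- in particular for every smooth $n$-dimensional Fano manifold --- and every effective $\mathbb Q$-divisor $D'\sim_{\mathbb Q}-K_X$, the pair $(X,t\,D')$ is log canonical, i.e.\ $\mathrm{lct}(X,D')\ge t(n)$.

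Combining Demailly's formula with this bound yields $\alpha(X)\ge t(n)$ for every $n$-dimensional Fano manifold $X$, and the theorem follows upon setting $\varepsilon_0(n):=t(n)>0$. In this argument the only genuinely substantial ingredient is Birkar's uniform lower bound on log canonical thresholds of anti-(pluri)canonical divisors, which rests on the boundedness of Fano varieties with bounded singularities; once that result is taken as a black box, and given Demailly's algebraic description of the $\alpha$-invariant, the deduction is purely formal. The only points requiring (routine) care are the identification of the analytically defined $\alpha(X)$ with the algebraic infimum of log canonical thresholds --- this is the cited Theorem A.3 of \cite{CS} --- and the stability of the lower bound under passing to the infimum over $m$ and $D$, both of which are immediate.
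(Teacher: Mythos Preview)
Your proposal is correct and follows exactly the approach indicated in the paper: the theorem is stated there as an immediate corollary of Birkar's uniform lct bound (Theorem 1.4 of \cite{B}) combined with Demailly's identity $\alpha(X)=\inf_{m}\inf_{D\in|-mK_X|}\mathrm{lct}(X,m^{-1}D)$. You have simply spelled out the routine deduction that the paper leaves implicit.
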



From the Harnack inequality in \cite{T0}, for any fixed K\"ahler metric $\omega\in c_1(X)$,  the curvature equation for $\omega_t$ along the continuity method 
\begin{equation}\label{cont1}
Ric(\omega_t) =t(\omega_t) + (1-t)\omega
\end{equation}  can be solved for all $t\in [0, (n+1)\alpha(X)/n)$. As a consequence, we have the following corollary.

\begin{corollary} There exists $\varepsilon_1= \varepsilon_1(n)>0$ such that for any  $n$-dimensional Fano manifold $X$, there exists a K\"ahler metric $\hat\omega\in c_1(X)$ satisfying 
\begin{equation}\label{eqn:omega0}Ric(\hat\omega)\ge \varepsilon_1 \hat\omega.
\end{equation}

\end{corollary}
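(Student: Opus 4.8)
The plan is to combine the previous theorem (Birkar's lower bound $\alpha(X)\ge\varepsilon_0(n)$) with Tian's continuity-method solvability result stated just above, and then extract a uniform lower Ricci bound from the equation satisfied at a positive time $t$. Concretely, fix $\varepsilon_0=\varepsilon_0(n)>0$ as in the preceding theorem, so that $\alpha(X)\ge\varepsilon_0(n)$ for every $n$-dimensional Fano manifold $X$. By the quoted consequence of the Harnack inequality of \cite{T0}, for any fixed K\"ahler metric $\omega\in c_1(X)$ the continuity-method equation $Ric(\omega_t)=t\,\omega_t+(1-t)\omega$ is solvable for all $t\in[0,(n+1)\alpha(X)/n)$, hence in particular for all $t\in[0,(n+1)\varepsilon_0(n)/n)$.

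Now choose any $t_0=t_0(n)\in(0,(n+1)\varepsilon_0(n)/n)$, for instance $t_0=\tfrac{n+1}{2n}\varepsilon_0(n)$, and let $\omega_{t_0}$ be the corresponding solution. Setting $\hat\omega:=\omega_{t_0}$, the equation reads $Ric(\hat\omega)=t_0\,\hat\omega+(1-t_0)\omega$. Since $\omega$ is a K\"ahler metric it is in particular a positive $(1,1)$-form, and $0<t_0<1$ (as $(n+1)\varepsilon_0(n)/n<1$ may be arranged by shrinking $\varepsilon_0(n)$ if necessary, which does not affect Birkar's bound — one may always decrease $\varepsilon_0$), so $(1-t_0)\omega\ge 0$. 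Therefore
\begin{equation}
Ric(\hat\omega)=t_0\,\hat\omega+(1-t_0)\omega\ \ge\ t_0\,\hat\omega.
\end{equation}
Taking $\varepsilon_1=\varepsilon_1(n):=t_0(n)=\tfrac{n+1}{2n}\varepsilon_0(n)>0$ gives $Ric(\hat\omega)\ge\varepsilon_1\hat\omega$ with $\hat\omega\in c_1(X)$, which is exactly \eqref{eqn:omega0}, and $\varepsilon_1$ depends only on $n$.

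The only point requiring a little care — and the main (mild) obstacle — is ensuring that the chosen $t_0$ is simultaneously strictly positive, strictly less than the solvability threshold $(n+1)\alpha(X)/n$, and at most $1$, so that the leftover term $(1-t_0)\omega$ is nonnegative and can simply be dropped. All three are arranged at once by replacing $\varepsilon_0(n)$ with $\min\{\varepsilon_0(n),\,n/(n+1)\}$ if needed (a smaller $\alpha$-lower-bound is still a valid lower bound), and then taking $t_0$ to be, say, half the resulting threshold. No analysis beyond the already-quoted solvability statement is needed; the positivity of $Ric(\hat\omega)-\varepsilon_1\hat\omega$ is immediate from the continuity equation. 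Finally, since this works for every Fano manifold of dimension $n$ with the same constant $\varepsilon_1(n)$, the corollary follows. \qed
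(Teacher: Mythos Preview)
Your proof is correct and is exactly the argument the paper has in mind: the paper states the corollary as an immediate consequence of Tian's solvability of the continuity equation $Ric(\omega_t)=t\,\omega_t+(1-t)\omega$ on $[0,(n+1)\alpha(X)/n)$ together with Birkar's uniform bound $\alpha(X)\ge\varepsilon_0(n)$, and you have simply written out that deduction. Your care in ensuring $t_0<1$ so that $(1-t_0)\omega\ge 0$ can be dropped is the only detail the paper leaves implicit.
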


We can also assume that $\hat\omega$ is invariant under the group action of  the maximal compact subgroup $G$ of $Aut(X)$ by choosing a $G$-invariant K\"ahler metric $\omega$ in the equation (\ref{cont1}).

The greatest Ricci lower bound $R(X)$ for a Fano manifold $X$ is introduced in \cite{T2, Sz} and is defined by
$$\mathcal{R}(X) = \sup\{t \in\mathbb R~|~ \exists\; \omega\in c_1(X) \text{ such~that }Ric(\omega)\ge t \omega\}. $$
Immediately one has the following corollary. 

\begin{corollary} \label{cor:2.1}There exists an $r_0=r_0(n)>0$ such that for any $n$-dimensional Fano manifold $X$, 
\begin{equation}
\mathcal{R}(X) \ge  r_0.
\end{equation}
\end{corollary}

We are informed by Xiaowei Wang that Corollary \ref{cor:2.1} is already a consequence of  results in \cite{LWX}.  In fact, Theorem 5.2 and Proposition 5.1 in \cite{LWX} will imply that there exist $m=m(n)>0$ and $\beta=\beta(n) \in (0, 1]$ such that  for any $n$-dimensional Fano manifold $X$, there exists  a smooth divisor $D \in |-mK_X|$ and a conical K\"ahler-Einstein metric $\omega \in c_1(X)$ satisfying $$Ric(\omega) = \beta \omega + (1-\beta)m^{-1} [D].$$
Then Corollary \ref{cor:2.1} immediately follows by the relation between $\mathcal{R}(X)$ and the existence of conical K\"ahler-Einstein metric established in \cite{SW}.



Now let us recall Perelman's entropy functional for a Fano manifold $(X, g)$ with the associated K\"ahler form $\omega_g \in c_1(X)$. The $\mathcal W$-functional is defined by 
\begin{equation*}
\mathcal{W}(g, f) =\frac{1}{V} \int_X (R + |\nabla f|^2 + f - 2 n)  e^{-f} dV_g,
\end{equation*}
where $V= c_1^n(X)$, and the $\mu$-functional is defined by 
\begin{equation*}
\mu(g) = \inf_f\left\{ \mathcal{W}(g, f)~\left|~ \frac{1}{V} \int_X e^{-f} d V_g = 1 \right. \right\}. 
\end{equation*}
\begin{lemma}\label{lemma2.0}
There exists  $A= A(n)>0$ such that for the Riemannian metric $\hat g$ associated to the form $\hat\omega$ in \eqref{eqn:omega0}
$$\mu(\hat g)\ge -A.$$
\end{lemma}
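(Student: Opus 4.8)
The plan is to obtain the lower bound on $\mu(\hat g)$ by combining the Ricci lower bound $\ric(\hat\omega)\ge \varepsilon_1\hat\omega$ from \eqref{eqn:omega0} with the standard relationship between Perelman's $\mu$-functional and the log-Sobolev constant, together with the diameter and volume control that the Ricci lower bound provides. First I would recall that for any $f$ with the normalization $\frac1V\int_X e^{-f}dV_{\hat g}=1$, the value $\mathcal W(\hat g,f)$ is bounded below once one controls the Sobolev/log-Sobolev inequality on $(X,\hat g)$ and the scalar curvature from below. Since $\ric(\hat\omega)\ge\varepsilon_1\hat\omega>0$, Myers' theorem gives a uniform upper bound $\mathrm{diam}(X,\hat g)\le D_0(n)$, and Bishop--Gromov gives a uniform upper bound on the volume $V=c_1^n(X)$ as well as a non-collapsing lower bound for unit balls; the scalar curvature satisfies $R\ge 2n\varepsilon_1$ pointwise. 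These are exactly the ingredients in the standard argument (going back to Perelman, and worked out in detail e.g. in the Kähler--Ricci flow literature) that a uniform lower Ricci bound, diameter bound, and volume bound yield a uniform lower bound for $\mu(g)$.

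More concretely, the key steps in order are: (i) from \eqref{eqn:omega0} deduce $R_{\hat g}\ge 2n\varepsilon_1$ and $\mathrm{diam}(X,\hat g)\le \pi\sqrt{(2n-1)/\varepsilon_1}$ by Myers; (ii) apply Bishop--Gromov to get $\mathrm{Vol}(X,\hat g)\le V_0(n)$ and, more importantly, a uniform lower bound $\mathrm{Vol}(B_r(x))\ge \kappa(n) r^{2n}$ for all $r\le 1$, i.e. $\kappa$-noncollapsing; (iii) invoke the fact that a lower Ricci bound plus noncollapsing plus diameter bound implies a uniform Sobolev inequality (Croke, or the Li--Yau / Gallot estimates), hence a uniform log-Sobolev inequality with constants depending only on $n$; (iv) translate the log-Sobolev inequality into the lower bound $\mathcal W(\hat g,f)\ge -A(n)$ valid for all admissible $f$, by writing $w=e^{-f/2}$ so that $\frac1V\int w^2=1$ and $\mathcal W(\hat g,f)=\frac1V\int_X\big(4|\nabla w|^2+R_{\hat g}w^2 - 2w^2\log w^2\big)\,dV_{\hat g}-2n$, then estimating the entropy term $-\int w^2\log w^2$ from below using the Sobolev inequality; (v) take the infimum over $f$ to conclude $\mu(\hat g)\ge -A(n)$.

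The main obstacle is step (iii)--(iv): producing a log-Sobolev constant that genuinely depends only on $n$. A lower Ricci bound and diameter bound alone do not directly give a $C^0$-type Sobolev inequality without also knowing the volume is bounded \emph{below} (otherwise the Sobolev constant degenerates as the manifold collapses); this is why the noncollapsing estimate from Bishop--Gromov, normalized using $V=c_1^n(X)$, is essential, and one should be careful that the relevant normalized volume is controlled in both directions purely in terms of $n$. An alternative and perhaps cleaner route that avoids the explicit Sobolev constant is to use the monotonicity of $\mu$ along the Kähler--Ricci flow: run the flow $\partial_t\omega=-\ric(\omega)+\omega$ starting from $\hat\omega$; since $\ric(\hat\omega)\ge\varepsilon_1\hat\omega$ the flow exists and, by Perelman's monotonicity, $\mu(g(t))$ is nondecreasing, so it suffices to bound $\mu(\hat g)$ below at $t=0$ — which brings us back to the same estimate, but now one only needs a crude, non-sharp lower bound, and any of the standard references (e.g. the treatment of Perelman's estimates for the Kähler--Ricci flow on Fano manifolds) supplies the required inequality with constant depending only on $n$ given the uniform Ricci, diameter, and volume control established in steps (i)--(ii). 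I would present the argument via this second route, citing the relevant form of the log-Sobolev/Sobolev inequality under a lower Ricci bound and using Myers and Bishop--Gromov to fix all constants in terms of $n$.
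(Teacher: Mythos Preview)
Your proposal is correct and follows the paper's proof almost exactly: Myers for the diameter bound, volume comparison for the upper volume bound, Croke for a uniform Sobolev constant, and then Jensen's inequality together with the Sobolev inequality to bound the entropy term in $\mathcal W(\hat g,f)$ from below. The one point you should make explicit---which you correctly flag as the main obstacle but do not quite resolve---is that the volume \emph{lower} bound does not come from Bishop--Gromov; it comes from the fact that $\hat\omega\in c_1(X)$ lies in an integral cohomology class, so $\mathrm{Vol}(X,\hat g)=c_1(X)^n\ge 1$, and the paper states this directly before invoking Croke (your ``alternative route'' via flow monotonicity is, as you yourself observe, circular).
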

\begin{proof}
Since $Ric(\hat g)$ is bounded from below by a uniform positive constant $\varepsilon_1(n)$, by Myers' theorem and volume comparison, 
$$\mathrm{Vol}(X,\hat g)\le C(n),\quad \mathrm{diam}(X,\hat g)\le C(n).$$
On the other hand, since $\hat \omega\in c_1(X)$ is in an integral cohomology class, in particular $\mathrm{Vol}(X,\hat g)\ge c(n)>0$. By Croke's theorem, the Sobolev constant $C_S$ of $(X,\hat g)$ is uniformly bounded. It is well-known that a Sobolev inequality implies the lower bound of $\mu$-functional. For completeness, we provide a proof below. 

For any $f\in C^\infty$ with $\int_X e^{-f}dV_{\hat g} = V$, we write $e^{-f/2} = \phi$.  By Jensen's inequality
\begin{align*}
\frac 1 V\int_X \phi^2 \log \phi^{\frac{2}{n-1}} & \le \log \Big( \frac 1 V\int_X \phi ^{\frac{2n}{n-1}}  \Big)\\
&\le \log  \Big( C_S \int_X ( |\nabla \phi|^2 + \phi^2  )    \Big)\\
& \le \frac{4}{n-1}\int_X |\nabla \phi|^2 + C(n).
\end{align*}
So
\begin{align*}
\mathcal W(\hat g,f) & = \frac 1 V\int_X ( R \phi^2 + 4 |\nabla\phi|^2 - \phi^2 \log \phi^2  )dV_{\hat g} - 2 n\ge -C(n).
\end{align*}
\qed
\end{proof}

Let $(X,g)\in\mathcal{KR}(n)$ be a gradient shrinking K\"ahler-Ricci soliton which satisfies the equation
\begin{equation}\label{eqn:KRS}Ric(\omega_g) +\ddbar u = \omega_g, \quad  \nabla  \nabla u = 0.\end{equation}
Let $G\subset Aut(X)$ be the compact one-parameter subgroup generated by the holomorphic vector field $\mathrm{Im}(\nabla u)$. As we mentioned before, the metric $\hat\omega$ in \eqref{eqn:omega0} can be taken to be $G$-invariant. 

\begin{corollary} For any $(X, g) \in \mathcal{KR}(n)$, we have 
\begin{equation*}
\mu(g) \geq -A,
\end{equation*}where $A = A(n)$ is the constant in Lemma \ref{lemma2.0}.
\end{corollary}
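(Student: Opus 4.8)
The plan is to realize the soliton $g$ as the long-time limit of the normalized K\"ahler--Ricci flow issued from the auxiliary metric $\hat\omega$ of \eqref{eqn:omega0}, and then to combine the monotonicity of Perelman's $\mu$-functional with its upper semicontinuity. Concretely, run the normalized flow $\partial_t\omega_t = -Ric(\omega_t) + \omega_t$ with $\omega_0 = \hat\omega$; since $\hat\omega\in c_1(X)$ this flow exists for all $t\ge 0$ and $[\omega_t] = c_1(X)$ throughout. By Perelman's entropy monotonicity in its Fano/shrinker form --- with $\mathcal W$ taken at the fixed scale adapted to the normalization $Ric(\omega)=\omega$, which is exactly the functional defined above --- the function $t\mapsto\mu(g_t)$ is non-decreasing, so Lemma \ref{lemma2.0} applied at $t=0$ gives $\mu(g_t)\ge\mu(\hat g)\ge -A$ for every $t\ge 0$.

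The remaining point is to bound $\mu(g)$ from below by $\lim_{t\to\infty}\mu(g_t)$. For this I would invoke the convergence theory of the K\"ahler--Ricci flow on a Fano manifold carrying a soliton: since $X$ admits the K\"ahler--Ricci soliton $g$, the flow $g_t$ converges as $t\to\infty$, modulo a family of diffeomorphisms induced by holomorphic automorphisms, smoothly on the fixed compact manifold $X$ to a K\"ahler--Ricci soliton, which by uniqueness of K\"ahler--Ricci solitons up to $\mathrm{Aut}_0(X)$ is isometric to $g$ --- and equals $g$ once $\hat\omega$ is chosen $G$-invariant, as already arranged. The $\mu$-functional is invariant under diffeomorphisms, and since it is an infimum of functionals that depend continuously on the metric it is upper semicontinuous under $C^\infty$ convergence of metrics on $X$; hence $\mu(g)\ge\limsup_{t\to\infty}\mu(g_t)$. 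As $\mu(g_t)$ is non-decreasing and bounded below by $-A$, passing to the limit yields $\mu(g)\ge -A$, which is the assertion.

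The substance of the argument lies entirely in the two inputs it invokes. The first, Perelman's entropy monotonicity along the normalized K\"ahler--Ricci flow, is routine but requires care to match the flow normalization, the scale in $\mathcal W$, and the sign conventions so that monotonicity applies verbatim. The second, and the genuine obstacle, is the convergence theorem of Tian--Zhu asserting that the flow from an arbitrary metric in $c_1(X)$ converges to the soliton: this is a deep result whose proof is well beyond the scope of this note, and it (together with the uniqueness of K\"ahler--Ricci solitons) is what makes the limit of $\mu(g_t)$ computable. One could in principle try to bypass the full convergence statement by noting that $\lim_t\mu(g_t)$ exists by monotonicity and identifying it directly from the soliton identity \eqref{eqn:KRS}, but routing through the established convergence theorem is the cleanest path.
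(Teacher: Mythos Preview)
Your proposal is correct and follows essentially the same route as the paper: run the normalized K\"ahler--Ricci flow from the $G$-invariant metric $\hat\omega$, invoke the Tian--Zhu/Tian--Zhang--Zhang--Zhu convergence theorem to the soliton, and combine Perelman's monotonicity of $\mu$ with the lower bound of Lemma~\ref{lemma2.0}. Your treatment is in fact slightly more careful than the paper's, since you isolate the upper semicontinuity of $\mu$ under smooth convergence (which is all that is needed) rather than asserting the full equality $\lim_{t\to\infty}\mu(g_t)=\mu(g)$.
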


\begin{proof}
We consider the normalized K\"ahler-Ricci flow with initial metric $\hat\omega$ in \eqref{eqn:omega0} $G$-invariant. 
\begin{equation*}
\frac{\partial \omega(t)}{\partial t} = - Ric(\omega(t)) + \omega(t),\quad \omega(0) = \hat \omega.
\end{equation*}
By the convergence theorem for K\"ahler-Ricci flow (\cite{TZ, TZZZ}), $\omega(t)$ converges smoothly to $\omega_g$, modulo some diffeomorphisms. So $\lim_{t\to \infty} \mu( g(t) ) = \mu(g)$. 

On the other hand, $\mu(g(t))$ is monotonically non-decreasing along the K\"ahler-Ricci flow (\cite{P}). The lower bound of $\mu(g)$ follows from this monotonicity and the lower bound of $\mu(\hat g)$ established in Lemma \ref{lemma2.0}.

\qed\end{proof}



Now we can apply the same argument as in \cite{PSS} because the assumption of the uniform bound for the Futaki invariant in \cite{PSS} is to obtain a uniform lower bound for the $\mu$-functional. This will complete the proof of Theorem \ref{main1}. The argument in \cite{PSS} also implies the uniform bound for the scalar curvature and diameter of $(X, g, u)\in \mathcal{KR}(n)$ as well as $|\nabla u|^2$ and hence the Futaki invariant of $(X, g)$.  This implies Corollary \ref{main2}.




\section{Generalizations}

We generalize our previous discussion to Riemannian complete gradient shrinking Ricci solitons $(M^n ,g ,u)$ satisfying the equation
$$Ric(g) + \nabla^2 u = \frac  1 2 g.$$
By \cite{CZ} we can always normalize $u$ such  that $\int_M e^{-u}dV_g = 1$.
\begin{definition}
We denote $\mathcal{RS}(n, A)$ to the set of $n$-dimensional closed or complete shrinking gradient Ricci solitons $(M, g, u)$ satisfying
\begin{equation*}
\mu(g) \ge -A
\end{equation*}
with the normalization condition $\int_M e^{-u} dV_g = 1$. 
\end{definition}

The following proposition is the main result of this section and most results in the proposition are straightforward applications of the compactness results  \cite{WZ,ZZh} with Bakry-Emery Ricci curvature bounded below. 

\begin{proposition} \label{propgen} Let $\{ (M_i, g_i, u_i, p_i) \}_{i=1}^\infty $ be a sequence in  $\mathcal{RS}(n, A)$ with $n\geq 4$, where $p_i$ be a minimal point of $u_i$. Then after possibly passing to subsequence, $(M_i, g_i, u_i, p_i)$ converges in the Gromov-Hausdorff topology to a metric length space $(M_\infty, d_\infty, u_\infty)$ satisfying the following.
\begin{enumerate}

\item The singular set $\Sigma_\infty$ of the metric space $(M_\infty, d_\infty)$ is a closed set of Hausdorff dimension no greater than $n-4$. 

\item $(M_i, g_i, u_i)$ converges smoothly to a gradient shrinking Ricci soliton  $(M_\infty\backslash \Sigma_\infty, g_\infty, u_\infty)$ satisfying 
\begin{equation*}
Ric(g_\infty) =\frac 1 2 g_\infty + \nabla^2  u_\infty.
\end{equation*}
\item $(M_\infty, d_\infty)$ coincides with the metric completion of $(M_\infty \setminus \Sigma_\infty, g_\infty)$.

\end{enumerate}
Furthermore, if there exists $V>0$ such that $Vol_{g_i}(M_i) \leq V$ for all $i=1, 2, ...$, the limiting metric space $(M_\infty, d_\infty)$ is compact.

\end{proposition}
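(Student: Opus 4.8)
The plan is to combine the a priori structure theory for gradient shrinking Ricci solitons due to Cao--Zhou \cite{CZ} with the uniform entropy lower bound $\mu(g_i)\ge -A$, and then to invoke the Gromov--Hausdorff compactness theory for metrics with Bakry--Emery Ricci curvature bounded below \cite{WZ,ZZh}, upgrading the raw Gromov--Hausdorff statement to the smooth/stratified conclusion by an $\epsilon$-regularity argument of the type used in \cite{PSS}.

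First I would record what is known about a single normalized shrinker $(M,g,u)$ with $Ric(g)+\nabla^2 u=\tfrac12 g$ and $\int_M e^{-u}\,dV_g=1$. Tracing the soliton equation gives $R+\Delta u=\tfrac n2$, Hamilton's identity gives $R+|\nabla u|^2-u=\mathrm{const}$, and $R\ge 0$ by Chen's maximum principle argument; evaluating at a minimal point $p$ of $u$, where $\nabla u=0$ and $\Delta u\ge 0$, then bounds $0\le R(p)\le\tfrac n2$ and, together with $\mu(g)\ge -A$, controls the value $u(p)$ and hence the additive normalization of $u$. By Cao--Zhou \cite{CZ}, $u$ is proper with the two-sided quadratic bound
$$\tfrac14\big(d(x,p)-c_1\big)_+^2\le u(x)\le \tfrac14\big(d(x,p)+c_2\big)^2,$$
the volume grows at most Euclidean, $\mathrm{Vol}(B(p,r))\le C(n)r^n$, and at least linearly, $\mathrm{Vol}(B(p,r))\ge c(n)\,\mathrm{Vol}(B(p,1))\,r$ for $r\ge 1$. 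Moreover $\mu(g)\ge -A$ yields a uniform logarithmic Sobolev inequality and hence, by Perelman's argument \cite{P}, a uniform $\kappa=\kappa(n,A)>0$ local non-collapsing; combined with the curvature control near $p$ this gives $\mathrm{Vol}(B(p_i,1))\ge v(n,A)>0$ uniformly.

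Next I would reduce the convergence statement to the Bakry--Emery theory. Since $Ric_{u_i}:=Ric(g_i)+\nabla^2 u_i=\tfrac12 g_i$ is uniformly bounded below, and the soliton equation rewrites $\nabla^2 u_i=\tfrac12 g_i-Ric(g_i)$ so that $u_i$ is uniformly controlled in $C^\infty_{loc}$ on regions of bounded curvature once its additive constant is fixed as above, the results of \cite{WZ,ZZh} give pointed Gromov--Hausdorff precompactness of $(M_i,g_i,u_i,p_i)$ with limit $(M_\infty,d_\infty,u_\infty,p_\infty)$, which is automatically the metric completion of its regular part. The finer conclusions --- that $\Sigma_\infty$ is closed of Hausdorff dimension at most $n-4$, that $(g_i,u_i)\to(g_\infty,u_\infty)$ smoothly on $M_\infty\setminus\Sigma_\infty$, and that $Ric(g_\infty)=\tfrac12 g_\infty+\nabla^2 u_\infty$ there --- I would obtain from an $\epsilon$-regularity theorem for shrinkers as in \cite{PSS}: a uniform local bound $\int_{B(x,1)}|Rm|^{n/2}\,dV_{g_i}\le C(n,A)$, derived from $\mu(g_i)\ge -A$, the soliton identities and the non-collapsing, implies that the curvature is bounded near any point with sufficiently small scale-invariant $L^{n/2}$ curvature energy, so the exceptional set is closed of codimension at least $4$, and on its complement smooth subconvergence follows from elliptic regularity applied to the soliton system. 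Establishing this local $L^{n/2}$ curvature bound and verifying the resulting $\epsilon$-regularity (equivalently, checking the hypotheses of \cite{WZ,ZZh} uniformly along the sequence) is the main technical point.

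Finally, for the last assertion, assume $\mathrm{Vol}_{g_i}(M_i)\le V$ for all $i$. If some $M_i$ were noncompact, the at-least-linear volume growth $\mathrm{Vol}(B(p_i,r))\ge c(n)\,v(n,A)\,r$ would force $\mathrm{Vol}(M_i)=\infty$, a contradiction; hence every $M_i$ is compact, and $V\ge\mathrm{Vol}\big(B(p_i,\mathrm{diam}(M_i))\big)\ge c(n)v(n,A)\,\mathrm{diam}(M_i)$ gives a uniform diameter bound $D=D(n,A,V)$. A uniform diameter bound forces the Gromov--Hausdorff limit $(M_\infty,d_\infty)$ to be compact, completing the argument.
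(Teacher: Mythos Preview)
Your overall framework---soliton identities plus Cao--Zhou growth estimates, uniform $\kappa$-noncollapsing from the entropy bound, and then the Bakry--\'Emery Cheeger--Colding theory of \cite{WZ,ZZh}---matches the paper's. The discrepancy is in how you upgrade the $C^\alpha$ convergence on the regular set to smooth convergence, and there your proposal has a genuine gap.

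You assert a uniform local bound $\int_{B(x,1)}|Rm|^{n/2}\,dV_{g_i}\le C(n,A)$ ``derived from $\mu(g_i)\ge -A$, the soliton identities and the non-collapsing'', and then appeal to an $\epsilon$-regularity of $L^{n/2}$ type. But no such $L^{n/2}$ curvature bound is known for general $n$: the soliton identities control $R$ and $|\nabla u|^2$ in terms of $u$ (hence $d(\cdot,p)$), not the full curvature tensor, and the reference \cite{PSS} you invoke is specific to the K\"ahler setting. Without this input the $\epsilon$-regularity route stalls. The paper sidesteps the issue entirely: once \cite{ZZh} gives a uniform $C^\alpha$ harmonic radius near a regular point $y$, the isoperimetric defect of a small ball $B_{g_i}(y_i,r_0)$ is as close to Euclidean as one likes, and since each $(M_i,g_i)$ generates a self-similar Ricci flow $g_i(t)=\varphi_t^*g_i$, Perelman's pseudo-locality theorem \cite{P} applies directly to yield uniform curvature bounds for $g_i(t)$, $t\in(0,1]$, near $y_i$; pulling back by $\varphi_t$ and sending $t\to 0$ gives the higher-order estimates on $g_i$ itself and hence smooth subconvergence to a shrinker on the regular part. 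This is the missing idea in your argument.

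Your treatment of the final compactness assertion is different from the paper's but not wrong: the paper instead runs Perelman's argument as in \cite{ST} to bound $\|u\|_{L^\infty}$, $\|\nabla u\|_{L^\infty}$, $\|R\|_{L^\infty}$ and $\mathrm{diam}$ simultaneously by $C(n,A,V)$, and then quotes \cite{Z}. Your shortcut via the Cao--Zhou at-least-linear volume lower bound is cleaner for the bare diameter statement, though you should check that their estimate is stated for radii up to the diameter in the compact case (it is, once the uniform lower bound $\mathrm{Vol}(B(p_i,1))\ge v(n,A)$ is in hand).
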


\begin{proof} For any $(M,g,u)\in\mathcal{RS}(n,A)$, $R = n/2 - \Delta u\ge 0$ (\cite{Zz}), the potential function $u$ satisfies $$\Delta u - |\nabla u|^2 + u = a,\quad a = \int_M u e^{-u}dV_g.$$
We denote $\tilde u = u -a$.
From $\Delta u\le n/2$ and immediately  we have $|\nabla \tilde u|^2\le n/2 +\tilde u$. By \cite{CZ}, the minimum of $\tilde u$ is achieved at some finite point $p\in M$, so $\min \tilde u = \tilde u(p)\ge -n/2$. Applying maximum principle to $\tilde u$ which satisfies $\Delta \tilde u - |\nabla \tilde u|^2 + \tilde u = 0$ at a minimum point $p\in M$, we obtain that $\min_M \tilde u = \tilde u(p)\le 0$.

From $|\nabla \tilde u|^2 \le \tilde u + n/2$, we have $|\nabla \sqrt{\tilde u + n/2}|\le \frac 1 2$. Thus for any $x\in M$
\begin{equation}\label{eqn:u}
\tilde u(x)\le \frac  12 d(p,x)^2 + \tilde u(p) + C(n)\le \frac 1 2 d(p,x)^2 + C(n).
\end{equation}
Immediately we have
\begin{equation}\label{eqn:grad u}
|\nabla \tilde u|^2(x)\le \frac 1 2 d(p,x)^2 + C(n),
\end{equation}
and
\begin{equation}\label{eqn:lap u}
-n/2\le -\Delta \tilde u(x) \le \frac 1 2 d(p,x)^2 + C(n). 
\end{equation}

 When $(M,g)$ is closed and $\mathrm{Vol}(M,g)\le V$. We note by Jensen's inequality $a\le \log V$. The Ricci soliton $(M,g,u)$ gives rise to a Ricci flow $g(t) = \varphi_t^* g$ with initial metric $g(0) = g$, where $\varphi_t$ is the diffeomorphism group generated by $\nabla u$, $\frac{\partial g(t)}{\partial t} = - 2 Ric(g(t)) + g(t)$.  Combining with the fact that $R(g)\ge 0$ and Perelman's non-collapsing theorem, we see that $(M,g)$ is non-collapsed in the sense that if $R\le r^{-2}$ on $B_r(x)$, then $\mathrm{Vol}(B_r(x))\ge \kappa(n,A) r^n$, for all $r\in (0,\bar r(n,A)]$. With this non-collapsing and equations \eqref{eqn:u}, \eqref{eqn:grad u} and \eqref{eqn:lap u}, we can apply the same argument of Perelman as in Section 3 of \cite{ST} to show that there exists a uniform constant $C(n,A,V)>0$ such that for any closed $(M,g,u)\in \mathcal{RS}(n,A)$ with the additional assumption $\mathrm{Vol}(M,g)\le V$, 
\begin{equation}
\| u\|_{L^\infty} + \| \nabla u\|_{L^\infty(M,g)} + \| R\|_{L^\infty} + \mathrm{diam}(M,g)\le C(n,A,V).
\end{equation}
The non-collapsing of $(M,g)$ also implies a uniform lower bound on $\mathrm{Vol}(M,g)$.  Now we can apply the main theorem of \cite{Z}.

In general, when $(M,g)$ is complete, applying \cite{P} to the Ricci flow associated to $(M,g)$, there exists a $\kappa=\kappa(A,n)$ such that $(M,g)$ is $\kappa$-noncollapsed. In particular, $\mathrm{Vol}(B(p,1))\ge c(A,n) >0$. On any geodesic ball $B(p,r)$ with $p$ being the minimal point of $u$, $|\nabla u|\le \frac 1 2 r^2 + C(n,A)$. By the Cheeger-Colding theory for Bakry-Emery Ricci tensor $Ric(g) + \nabla^2 u$ (\cite{WZ,ZZh}), for any sequence of $(M_i,g_i, u_i, p_i)\in \mathcal{RS}(n, A)$ converges (up to a subsequence) in pointed Gromov-Hausdorff topology to a metric space $(M_\infty, d_\infty, p_\infty)$. Here we choose $p_i$ to be a minimum point of $u_i$. $M_\infty$ has the regular-singular decomposition $M_\infty = \mathcal R\cup \Sigma$. Recall a point $y\in\mathcal R$ if all tangent cone of $(M_\infty,d_\infty)$ at $y$ is isometric to $\mathbb R^n$. From \cite{ZZh} we know the singular set $\Sigma$ is closed and of Hausdorff dimension  at most $n-4$ and $d_\infty$ on $\mathcal R$ is induced by a $C^\alpha$ metric $g_\infty$. For any $y\in \mathcal R$ and $M_i\ni y_i\xrightarrow{GH} y$, when $i$ is large enough there exists a uniform $r_0=r_0(y)$ such that $(B_{g_i}(y_i,r_0), g_i)$ has uniform $C^\alpha$ bound (Theorem 1.2 of \cite{ZZh}). By choosing $r_0$ even smaller if possible, we may assume the isoperimetric constant of $(B_{g_i}(y_i,r_0), g_i)$ is very small so that we can apply Perelman's pseudo-locality theorem (\cite{P}) to the associated Ricci flow to derive uniform higher order estimates of $g_i$ nearby $y_i$, which in turn gives local estimates of $u_i$. So locally near $y_i$, the convergence is smooth and we conclude that the metric $g_\infty$ in a small ball around $y$ is a Ricci soliton.

\qed

\end{proof}

We remark that in the compact case, a compactness result is obtained earlier by Zhang \cite{Z} assuming a uniform upper bound for the diameter and a uniform lower bound for the volume. 

\bigskip

\noindent {\bf{Acknowledgements:}} The authors would like to thank Xiaowei Wang for valuable discussions and for teaching us his proof of  Corollary \ref{cor:2.1} in his work \cite{LWX} with Li and Xu.

\bigskip

\bigskip

{\noindent \footnotesize $^*$ Department of Mathematics\\
Columbia University, New York, NY 10027\\

{\noindent \footnotesize $^{**}$ Department of Mathematics\\
Columbia University, New York, NY 10027\\

\noindent $\dagger$ Department of Mathematics\\
Rutgers University, Piscataway, NJ 08854\\

\noindent $\ddagger$ Department of Mathematics\\
Rutgers University, Newark, NJ 07102\\ }

\end{document}